\documentclass[reqno, 12pt]{amsart}
\usepackage{amssymb}
\usepackage{amsmath}
\usepackage[mathscr]{euscript}
\usepackage{mathtools}
\usepackage[top=2cm,bottom=3cm,left=2.5cm,right=2.5cm]{geometry}

\newtheorem{theorem}{Theorem}

\newtheorem{lemma}{Lemma}
\newtheorem{definition}{Definition}

\theoremstyle{remark}
\newtheorem{example}{Example}
\newtheorem{remark}{Remark}

\DeclareMathOperator{\Id}{Id}

\DeclareMathOperator{\Ker}{Ker}
\DeclareMathOperator{\Ima}{Im}

\title{Hyers-Ulam stability for finite-dimensional nonautonomous dynamics
}

\author{ Davor Dragi\v cevi\'c}
\address[Davor Dragi\v cevi\'c]
{Faculty of Mathematics\\
    University of Rijeka\\
    Rijeka 51000, Croatia}
\email[D.~Dragi\v cevi\'c]{ddragicevic@math.uniri.hr}

\begin{document}
\maketitle

\begin{abstract}
The main purpose of this paper is to obtain necessary and sufficient conditions under which a nonautonomous, finite-dimensional and  two-sided dynamics generated by a sequence of matrices or a linear
ODE exhibits Hyers-Ulam stability. Specifically,
in the case of discrete time we consider a nonautonomous difference equation with possibly noninvertible coefficients, 
while in the case of continuous time we deal with a nonautonomous ordinary differential equation without any bounded growth assumptions. 

\vspace{3mm}

\noindent {\it Keywords}: exponential trichotomy; summable trichotomy; Hyers-Ulam stability.

\vspace{2mm}

\noindent{\it 2020 MSC}: 34D09, 34D10
\end{abstract}

\section{Introduction}
In the recent years many works have been devoted to the investigation of Hyers-Ulam stability for various classes of differential and difference equations. Roughly speaking, we say that a given differential or difference equation exhibits Hyers-Ulam stability if in a vicinity of  its approximate solutions, we can construct an exact solution. The importance of this notion stems from the fact we are usually unable to explicitly solve a given difference or differential equation, and that any numerical scheme will only  result in an  approximate solution of the equation. Thus, it is important to know that close to an approximate solution, there exists a true   solution of our equation.

In the present paper, we consider two classes of nonautonomous dynamics. More precisely, we deal with a nonautonomous difference equation of the form
\begin{equation}\label{LDEE}
x_{n+1}=A_n x_n \quad n\in \mathbb Z,
\end{equation}
where $(A_n)_{n\in \mathbb Z}$ is a sequence of linear operators on $\mathbb R^d$, as well as ordinary differential equations
\begin{equation}\label{ODE}
x'=A(t)x \quad t\in \mathbb R,
\end{equation}
where $A(t)$ is a  linear operator on $\mathbb R^d$ for each $t\in \mathbb R$, and the map $t\mapsto A(t)$ is continuous.
In~\cite{BD}, Backes and Dragi\v cevi\' c proved that Eq~\eqref{LDEE} and Eq.~\eqref{ODE} are Hyers-Ulam stable provided that these equations admit an exponential trichotomy (in the sense of Elaydi and Hajek~\cite{EH}). Moreover, they established converse result implying that Hyers-Ulam stability of Eq.~\eqref{LDEE} and Eq~\eqref{ODE} yields the existence of exponential trichotomy for these equations provided that operators $A_n$ are invertible and that $\sup_{t\in \mathbb R}\|A(t)\|<+\infty$ (actually for the approach in~\cite{BD} to work, it is sufficient that Eq.~\eqref{ODE} admits bounded growth). For related results that deal with the case when maps $n\mapsto A_n$ and $t\mapsto A(t)$ are periodic, we refer to~\cite{AO, BBT, BLO, BOST, O} and references therein. For other relevant contributions to the Hyers-Ulam stability and shadowing of  nonautonomous dynamics, see~\cite{AOR, BD21, BDOP, BDT, D, DJP, P}. In particular, in~\cite{BD21, BDOP, BDT, DJP, P} sufficient conditions under which nonlinear difference (or differential) equations obtained by perturbing Eq.~\eqref{LDEE} (or Eq.~\eqref{ODE}) exhibit (conditional) Hyers-Ulam, Hyers-Ulam-Rassias, or general (parametrized) shadowing properties were discussed. These works contain no converse results guaranteeing appropriate hyperbolicity of Eq.~\eqref{LDEE} (or Eq.~\eqref{ODE}) under Hyers-Ulam stability.

The main objective of the present paper is to revisit the above mentioned converse results from~\cite{BD}, and to remove the assumption on invertibility of coefficients $A_n$ in Eq.~\eqref{LDEE} and of bounded growth related to Eq.~\eqref{ODE}. More precisely, we prove that the Hyers-Ulam stability of Eq.~\eqref{LDEE} is equivalent to exponential trichotomy under a relatively mild condition regarding the uniqueness of backward bounded solutions. In Example~\ref{EX1} we show that this condition cannot be omitted.  

On the other hand, in the case of continuous time we show that the Hyers-Ulam stability of Eq.~\eqref{ODE} is equivalent to the concept of \emph{summable trichotomy}. This notion is inspired by the previously mentioned concept of exponential trichotomy, as well as the notion of summable dichotomy which goes back to the work of Coppel~\cite{Coppel}. For different concepts of trichotomy and their role in the study of asymptotic behavior of nonautonomous dynamical systems we refer to~\cite{SS16, SS24} and references therein.
\section{Discrete time case}
\subsection{Preliminaries}
Throughout this paper, $\mathbb R^d$ will denote the $d$-dimensional Euclidean space equipped with some norm $|\cdot |$. By $\|\cdot \|$ we will denote the associated matrix norm on the space of all linear operators acting on $\mathbb R^d$.
Let $(A_n)_{n\in \mathbb Z}$ be a sequence of (not necessarily invertible) linear operators on $\mathbb R^d$.  We consider the associated linear difference equation given by 
\begin{equation}\label{lde}
x_{n+1}=A_nx_n, \quad n\in \mathbb Z.
\end{equation}
By $\mathcal A(m,n)$ we will denote the \emph{linear cocycle} associated with~\eqref{lde} which is given by
\[
\mathcal A(m, n)=\begin{cases}
A_{m-1} \cdots A_n & m>n; \\
\Id & m=n,
\end{cases}
\]
where $\Id$ denotes the identity operator on $\mathbb R^d$.
We recall the notion of Hyers-Ulam stability for~\eqref{lde}.
\begin{definition}\label{HU}
We say that Eq.~\eqref{lde} is \emph{Hyers-Ulam stable} if there exists $L>0$ with the property that for each $\delta >0$ and a sequence $(y_n)_{n\in \mathbb Z}\subset \mathbb R^d$ such that 
\begin{equation}\label{pse}
\sup_{n\in \mathbb Z}|y_{n+1}-A_n y_n| \le \delta, 
\end{equation}
there exists a solution $(x_n)_{n\in \mathbb Z}\subset \mathbb R^d$ of Eq.~\eqref{lde} such that
\begin{equation}\label{approx}
\sup_{n\in \mathbb Z}|x_n-y_n| \le L\delta.
\end{equation}
\end{definition}
We will also consider the stronger concept of Hyers-Ulam stability.
\begin{definition}\label{HUU}
We say that Eq.~\eqref{lde} is \emph{Hyers-Ulam stable with uniqueness} if there exists $L>0$ with the property that for each $\delta >0$ and a sequence $(y_n)_{n\in \mathbb Z}\subset \mathbb R^d$ such that~\eqref{pse} holds, there exists a unique solution $(x_n)_{n\in \mathbb Z}\subset \mathbb R^d$ of Eq.~\eqref{lde} satisfying~\eqref{approx}.
\end{definition}

\begin{remark}
Obviously, if~\eqref{lde} is Hyers-Ulam stable with uniqueness then it is Hyers-Ulam stable. 
\end{remark}

We also recall the notion of exponential dichotomy.
\begin{definition}
Let $J\in \{\mathbb Z, \mathbb Z^+, \mathbb Z^-\}$, where $\mathbb Z^+=\{n\in \mathbb Z: \ n\ge 0\}$ and $\mathbb Z^-=\{n\in \mathbb Z: \ n\le 0\}$. We say that Eq.~\eqref{lde} admits an \emph{exponential dichotomy} on $J$ if there exist a family of projections $P_n$, $n\in J$ and constants $D, \lambda >0$ such that the following conditions hold:
\begin{itemize}
\item for $n, n+1\in J$, 
\[
P_{n+1}A_n=A_n P_n, 
\]
 and $A_n\rvert_{\Ker P_n} \colon \Ker P_n \to \Ker P_{n+1}$ is invertible;
\item for $m, n\in J$ such that $m\ge n$, 
\[
\|\mathcal A(m,n)P_n\| \le De^{-\lambda (m-n)};
\]
\item for $m, n\in J$ such that $m\le n$,
\[
\|\mathcal A(m,n)(\Id-P_n)\| \le De^{-\lambda (n-m)},
\]
where 
\[
\mathcal A(m,n):=\left (\mathcal A(n,m)\rvert_{\Ker P_m} \right)^{-1}\colon \Ker P_n \to \Ker P_m, \quad m<n.
\]
\end{itemize}
\end{definition}

\begin{remark}
The following facts are well-known (see for example~\cite[Lemma 3.1]{BFP2}):
\begin{enumerate}
\item if Eq.~\eqref{lde} admits an exponential dichotomy on $\mathbb Z^+$ with respect to projections $P_n^+$, $n\in \mathbb Z^+$, then 
\[
\Ima P_n^+=\left \{v\in \mathbb R^d: \ \sup_{m\ge n}|\mathcal A(m,n)v|<+\infty \right \};
\]
\item if Eq.~\eqref{lde} admits an exponential dichotomy on $\mathbb Z^-$ with respect to projections $P_n^-$, $n\in \mathbb Z^-$, then $\Ker P_n^-$ consists of  $v\in \mathbb R^d$ with the property that there exists a sequence $(x_m)_{m\le n}\subset \mathbb R^d$ such that $x_n=v$, $x_m=A_{m-1}x_{m-1}$ for $m\le n$ and $\sup_{m\le n}|x_m|<+\infty$.
\end{enumerate}
\end{remark}

Finally, we recall the notion of exponential trichotomy originally introduced by Elaydi and Hajek~\cite{EH}.
\begin{definition}
We say that~\eqref{lde} admits an \emph{exponential trichotomy} if the following conditions hold:
\begin{itemize}
\item Eq.~\eqref{lde} admits an exponential dichotomy on $\mathbb Z^+$ with respect to projections $P_n^+$, $n\in \mathbb Z^+$;
\item Eq.~\eqref{lde} admits an exponential dichotomy on $\mathbb Z^-$ with respect to projections $P_n^-$, $n\in \mathbb Z^-$;
\item we have that 
\begin{equation}\label{pro}
P_0^-=P_0^-P_0^+=P_0^+P_0^-.
\end{equation}
\end{itemize}
\end{definition}

Our first result is the following:
\begin{theorem}\label{T}
Suppose that the only bounded sequence $(x_n)_{n\le 0} \subset \mathbb R^d$ such that $x_0=0$ and $x_{n+1}=A_n x_n$, $n\le -1$ is the zero-sequence, i.e. $x_n=0$ for $n\le 0$. Then, 
the following properties are equivalent:
\begin{enumerate}
\item[(a)] Eq.~\eqref{lde} is Hyers-Ulam stable;
\item[(b)] Eq.~\eqref{lde} admits an exponential trichotomy.
\end{enumerate}
\end{theorem}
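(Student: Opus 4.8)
The implication (b)$\Rightarrow$(a) is already established in~\cite{BD}, so the entire content is the converse (a)$\Rightarrow$(b). The plan is to show that Hyers-Ulam stability forces Eq.~\eqref{lde} to admit an exponential dichotomy on $\mathbb Z^+$ and an exponential dichotomy on $\mathbb Z^-$, and then to verify the compatibility condition~\eqref{pro} of the trichotomy. The first step is to reformulate Hyers-Ulam stability as a statement about the surjectivity (with bounded right inverse) of a certain linear operator. Specifically, let $Y=\ell^\infty(\mathbb Z,\mathbb R^d)$ and consider the operator $T\colon Y\to Y$ defined by $(Ty)_n=y_{n+1}-A_ny_n$. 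Hyers-Ulam stability says precisely that for every $z\in Y$ there exists $y\in Y$ with $Ty=z$ and $|y|_\infty\le L|z|_\infty$ modulo an element of $\ker T$ (the bounded solutions of~\eqref{lde}); equivalently, $T$ has closed range equal to all of $Y$, i.e. $T$ is surjective, and the induced map $Y/\ker T\to Y$ is a bounded isomorphism. This is the standard Perron-type characterization, and the admissibility theory then yields exponential dichotomies on the two half-lines: the restriction arguments on $\mathbb Z^+$ and $\mathbb Z^-$ give projections $P_n^+$ and $P_n^-$ together with constants $D,\lambda>0$. The noninvertibility of the $A_n$ is handled exactly as in the Elaydi--Hajek framework, since the dichotomy on $\mathbb Z^+$ does not require invertibility and the dichotomy on $\mathbb Z^-$ only needs invertibility of $A_n$ restricted to $\ker P_n^-$, which is part of the conclusion rather than a hypothesis.

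The key structural point is identifying the stable and unstable subspaces intrinsically. Using the remark recalled in the excerpt, $\Ima P_n^+$ must coincide with the set of $v$ whose forward orbit under the cocycle is bounded, and $\Ker P_n^-$ with the set of $v$ admitting a bounded backward orbit through $v$. These characterizations are forced once we know dichotomies exist; the real work is proving existence. I would obtain $\Ima P_0^+$ as follows: for $v\in\mathbb R^d$ with bounded forward orbit, the sequence $y_n = \mathcal A(n,0)v$ for $n\ge 0$ and $y_n=0$ for $n<0$ is an approximate solution with defect supported at $n=-1$, so Hyers-Ulam stability produces a genuine bounded solution nearby, and a uniform-norm estimate shows the forward orbit decays exponentially — this gives the dichotomy on $\mathbb Z^+$ via the usual discrete Perron argument. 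The dichotomy on $\mathbb Z^-$ is obtained dually, using approximate solutions that are an exact backward orbit for $n\le 0$ and vanish for $n>0$; here is precisely where the standing hypothesis enters, guaranteeing that the backward bounded solution with $x_0=0$ is unique, so that the projection $P_n^-$ is well-defined (its kernel is a genuine complement). Finally, condition~\eqref{pro} follows because $\Ima P_0^+\cap\Ker P_0^-$ is the space of globally bounded solutions evaluated at $0$, and a dimension/complementation bookkeeping using both dichotomies yields $P_0^-=P_0^-P_0^+=P_0^+P_0^-$; one shows $\Ker P_0^-\subseteq\Ima P_0^+$ is not needed, rather that $\Ima P_0^-\subseteq\Ima P_0^+$ and $\Ker P_0^+\subseteq\Ker P_0^-$, which is equivalent to~\eqref{pro}.

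The main obstacle, and the reason the hypothesis on uniqueness of backward bounded solutions cannot be dropped (cf. Example~\ref{EX1}), is that without it the natural candidate for $\Ker P_0^-$ — the set of initial conditions admitting a bounded backward orbit — may fail to be a linear complement of $\Ima P_0^-$ inside $\mathbb R^d$, or the projection may not be uniquely determined, so that the dichotomy on $\mathbb Z^-$ collapses. Concretely, the hypothesis ensures the linear map sending a bounded backward orbit to its value at $0$ is injective on the relevant space, which is what makes $n\mapsto P_n^-$ a well-defined bounded family of projections satisfying $P_{n+1}^-A_n=A_nP_n^-$ with $A_n|_{\Ker P_n^-}$ invertible. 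I expect the bulk of the technical effort to lie in extracting the exponential decay rate $\lambda$ and the uniform constant $D$ from the single Hyers-Ulam constant $L$ — this is a discrete Perron-type estimate where one tests against finitely-supported or geometrically-weighted perturbations and iterates, and one must be careful that all bounds are uniform in $n$ and independent of the particular approximate solution, exploiting finite-dimensionality of $\mathbb R^d$ to pass between the various equivalent subspace descriptions.
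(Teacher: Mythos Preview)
Your overall architecture matches the paper's: translate Hyers--Ulam stability into an admissibility statement (bounded inhomogeneities yield bounded solutions with the same constant $L$), deduce exponential dichotomies on $\mathbb Z^+$ and $\mathbb Z^-$ from admissibility via known Perron-type results, and then check~\eqref{pro} from the inclusions $\Ima P_0^-\subset \Ima P_0^+$ and $\Ker P_0^+\subset \Ker P_0^-$. The paper proceeds exactly this way, citing \cite{Henry} for the admissibility $\Rightarrow$ dichotomy step rather than extracting $D,\lambda$ by hand.

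There is, however, one genuine gap in your reformulation. You assert that Hyers--Ulam stability ``says precisely'' that the operator $T\colon \ell^\infty\to\ell^\infty$, $(Ty)_n=y_{n+1}-A_ny_n$, is surjective with bounded right inverse. This equivalence is \emph{not} immediate when the $A_n$ are noninvertible: given $z\in\ell^\infty$, to invoke Definition~\ref{HU} you first need \emph{some} sequence $y$ (bounded or not) with $Ty=z$, and solving $y_{n+1}-A_ny_n=z_n$ backward in $n$ is obstructed when $A_n$ fails to be invertible. The paper handles this by a truncation-plus-compactness argument (its Lemmas~\ref{L1} and~\ref{L2}): for each $m$ one sets $y_n=0$ for $n\le -m$ and iterates forward, obtaining a solution of the inhomogeneous equation only on $n\ge -m$; the resulting bounded sequences $(z_n^m)$ are then passed to a diagonal limit using that $\mathbb R^d$ is finite-dimensional. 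Your closing remark about ``exploiting finite-dimensionality'' is the right instinct, but you have placed it at the stage of extracting the exponential rate rather than at the earlier and more delicate step of establishing full two-sided admissibility. Without this, the bridge from (a) to the admissibility input required by the Perron/Henry machinery is missing.

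A smaller point: your description of the role of the hypothesis is slightly off. It is not that $\Ker P_0^-$ might fail to complement $\Ima P_0^-$ (any projection automatically has complementary range and kernel); rather, the hypothesis is exactly the extra injectivity condition that the cited admissibility result on $\mathbb Z^-$ (\cite[Corollary 1.11]{Henry}) needs in order to produce \emph{any} dichotomy projections at all. The paper invokes the hypothesis precisely at that citation and nowhere else.
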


\begin{proof}
$(a)\implies (b)$
Suppose that Eq.~\eqref{lde} is Hyers-Ulam stable and let $L>0$ be as in the Definition~\ref{HU}. 
\begin{lemma}\label{L1}
For $m\in \mathbb N$ and a sequence $(w_n)_{n\ge -m}\subset \mathbb R^d$ with $\sup_{n\ge -m}|w_n|<+\infty$, there exists a sequence $(z_n)_{n\in \mathbb Z}\subset \mathbb R^d$ such that 
\begin{equation}\label{zw}
z_{n+1}=A_n z_n +w_n \quad \text{for $n\ge -m$,}
\end{equation}
and 
\begin{equation}\label{l}
\sup_{n\in \mathbb Z}|z_n| \le L\sup_{n\ge -m}|w_n|.
\end{equation}
\end{lemma}

\begin{proof}[Proof of the lemma]
We define a  sequence $(y_n)_{n\in \mathbb Z}\subset \mathbb R^d$ by 
\[
y_n=\begin{cases}
0 & n\le -m; \\
A_{n-1}y_{n-1}+w_{n-1} & n>-m.
\end{cases}
\]
Observe that 
\[
y_n-A_{n-1}y_{n-1}=\begin{cases}
0 & n\le -m; \\
w_{n-1} & n>-m.
\end{cases}
\]
In particular, we have that
\[
\sup_{n\in \mathbb Z}|y_n-A_{n-1}y_{n-1}| =\sup_{n\ge -m}|w_n|.
\]
Since Eq.~\eqref{lde} is Hyers-Ulam stable, there exists a solution $(x_n)_{n\in \mathbb Z}\subset \mathbb R^d$ of Eq.~\eqref{lde} such that 
\begin{equation}\label{l1}
\sup_{n\in \mathbb Z}|x_n-y_n| \le L\sup_{n\in \mathbb Z}|y_{n+1}-A_n y_n|=L\sup_{n\ge -m}|w_n|.
\end{equation}
Set
\[
z_n:=y_n-x_n, \quad n\in \mathbb Z.
\]
By~\eqref{l1}, we have that~\eqref{l} holds. Moreover, since $(x_n)_{n\in \mathbb Z}$ is a solution of Eq.~\eqref{lde}, we have that
\[
z_{n+1}=y_{n+1}-x_{n+1}=A_n y_n+w_n-A_n x_n=A_n z_n+w_n \quad n\ge -m,
\]
which yields~\eqref{zw}.
The proof of the lemma is thus completed.
\end{proof}
\begin{lemma}\label{L2}
For each sequence $(w_n)_{n\in \mathbb Z}\subset \mathbb R^d$ such that $\sup_{n\in \mathbb Z}|w_n|<+\infty$, there exists a sequence $(z_n)_{n\in \mathbb Z}\subset \mathbb R^d$ satisfying 
\begin{equation}\label{adm}
z_{n+1}=A_n z_n+w_n \quad n\in \mathbb Z,
\end{equation}
and
\begin{equation}\label{bound}
\sup_{n\in \mathbb Z}|z_n| \le L\sup_{n\in \mathbb Z}|w_n|.
\end{equation}
\end{lemma}
\begin{proof}[Proof of the lemma]
For every $m\in \mathbb N$, we define a sequence $(w_n^m)_{n\ge -m}$ by $w_n^m=w_n$, $n\ge -m$. By Lemma~\ref{L1}, we conclude that there exists a sequence 
$(z_n^m)_{n\in \mathbb Z}\subset \mathbb R^d$ such that 
\begin{equation}\label{d1}
z_{n+1}^m=A_n z_n^m+w_n^m=A_n z_n^m+w_n \quad \text{for $n\ge -m$,}
\end{equation}
and 
\begin{equation}\label{d2}
\sup_{n\in \mathbb Z}|z_n^m| \le L\sup_{n\ge -m}|w_n^m|\le L\sup_{n\in \mathbb Z}|w_n|.
\end{equation}
It follows from~\eqref{d2} that for each $n\in \mathbb Z$, the sequence $(z_n^m)_m$ is a bounded sequence in $\mathbb R^d$, which therefore has a convergent subsequence. By applying the diagonal procedure, we can find a subsequence $(m_j)_{j\in \mathbb N}$ of $\mathbb N$ with the property that the sequence $(z_n^{m_j})_{j\in \mathbb N}$ converges for each $n\in \mathbb Z$. Let
\[
z_n:=\lim_{j\to \infty} z_n^{m_j}, \quad n\in \mathbb Z.
\]
Take now an arbitrary $n\in \mathbb Z$. Then, $n\ge -m_j$ for $j$ sufficiently large. 
By applying~\eqref{d1} for $m=m_j$ and passing to the limit when $j\to \infty$, we conclude that~\eqref{adm} holds. Similarly, \eqref{bound} follows from~\eqref{d2}. The proof of the lemma is completed.
\end{proof}
Set
\[
\mathcal S:=\left \{ v\in \mathbb R^d: \ \sup_{n\ge 0}|\mathcal A(n,0)v|<+\infty \right \}.
\]
Moreover, let $\mathcal U$ consist of all $v\in \mathbb R^d$ with the property that there exists a sequence $(x_n)_{n \in \mathbb Z^-}\subset \mathbb R^d$ such that $x_0=v$, $x_{n+1}=A_nx_n$ for $n\le -1$ and $\sup_{n\in \mathbb Z^-}|x_n|<+\infty$. Clearly, $\mathcal S$ and $\mathcal U$ are subspaces of $\mathbb R^d$.
\begin{lemma}
We have that 
\begin{equation}\label{split}
\mathbb R^d=\mathcal S+\mathcal U.
\end{equation}
\end{lemma}
\begin{proof}[Proof of the lemma]
Take $v\in \mathbb R^d$. We define a sequence $(w_n)_{n\in \mathbb Z}\subset \mathbb R^d$ by $w_{-1}=v$ and $w_n=0$ for $n\neq -1$. Clearly, $\sup_{n\in \mathbb Z}|w_n|=|v|<+\infty$. It follows from Lemma~\ref{L2} that there exists a sequence $(z_n)_{n\in \mathbb Z}\subset \mathbb R^d$ such that~\eqref{adm} holds and that 
$\sup_{n\in \mathbb Z}|z_n|<+\infty$. Observe that it follows from~\eqref{adm} that 
\[
z_0-A_{-1}z_{-1}=v \quad \text{and} \quad z_{n+1}=A_n z_n, \ n\neq -1.
\]
In particular, we have that  $z_n=\mathcal A(n, 0)z_0$ for $n\ge 0$. This implies that $z_0\in \mathcal S$. 
Moreover, we define $(x_n)_{n\in \mathbb Z^-}\subset \mathbb R^d$ by 
\[
x_n=\begin{cases}
z_n & n<0,\\
A_{-1}z_{-1} & n=0.
\end{cases}
\]
Since $x_{n+1}=A_n x_n$ for $n\le -1$ and $\sup_{n\in \mathbb Z^-}|x_n| <+\infty$, we have that $x_0=A_{-1}z_{-1}\in \mathcal U$. Hence,
\[
v=z_0-A_{-1}z_{-1}\in \mathcal S+\mathcal U.
\]
\end{proof}
Choose a subspace $Z\subset \mathcal U$ such that 
\begin{equation}\label{I}
\mathbb R^d=\mathcal S\oplus Z.
\end{equation}
\begin{lemma}\label{L3}
For each sequence $(w_n)_{n\in \mathbb Z^+}\subset \mathbb R^d$ such that $\sup_{n\in \mathbb Z^+}|w_n|<+\infty$, there exists a unique sequence $(x_n)_{n\in \mathbb Z^+}\subset \mathbb R^d$ such that $x_0\in Z$, $\sup_{n\ge 0}|x_n|<+\infty$ and 
\[
x_{n+1}=A_n x_n +w_n, \quad n\in \mathbb Z^+.
\]
\end{lemma}

\begin{proof}[Proof of the lemma]
Set $w_n=0$ for $n<0$. Since $\sup_{n\in \mathbb Z}|w_n|<+\infty$, it follows from Lemma~\ref{L2} that there exists a sequence $(z_n)_{n\in \mathbb Z}\subset \mathbb R^d$ such that~\eqref{adm} holds and that $\sup_{n\in \mathbb Z}|z_n|<+\infty$. By~\eqref{I}, there exist $v_1\in \mathcal S$ and $v_2\in Z$ such that $z_0=v_1+v_2$.  Let
\[
x_n=z_n-\mathcal A(n,0)v_1, \quad n\in \mathbb Z^+.
\]
It is clear that the sequence $(x_n)_{n\in \mathbb Z^+}$ has the desired properties. 

We now establish the uniqueness. Assume that $(\tilde x_n)_{n\in \mathbb Z^+}\subset \mathbb R^d$ is another sequence such that $\sup_{n\in \mathbb Z^+}|\tilde x_n|<+\infty$, $\tilde x_0\in Z$ and 
\[
\tilde x_{n+1}=A_n  \tilde x_n +w_n, \quad n\in \mathbb Z^+.
\]
Then, $x_n-\tilde x_n=\mathcal A(n, 0)(x_0-\tilde x_0)$ for $n\in \mathbb Z^+$. Hence, $x_0-\tilde x_0\in \mathcal S\cap Z$, and therefore $x_0=\tilde x_0$. We conclude that $x_n=\tilde x_n$ for $n\in \mathbb Z^+$.
\end{proof}
It follows from Lemma~\ref{L3} and~\cite[Corollary 1.10]{Henry} (see also~\cite[Corollary 4.4.]{SS}) that Eq.~\eqref{lde} admits an exponential dichotomy on $\mathbb Z^+$ with respect to projections $P_n^+$, $n\in \mathbb Z^+$ such that 
$\Ker P_0^+=Z$.

Take now $Z'\subset \mathcal S$ such that 
\[
\mathbb R^d=Z'\oplus \mathcal U.
\]
\begin{lemma}
For each sequence $(w_n)_{n\le -1}\subset \mathbb R^d$ such that $\sup_{n\le -1}|w_n|<+\infty$, there exists a unique sequence $(x_n)_{n\in \mathbb Z^-}\subset X$ such that $x_0\in Z'$, $\sup_{n\le 0}|x_n|<+\infty$ and 
\[
x_{n+1}=A_n x_n +w_n, \quad n\le -1.
\]
\end{lemma}
\begin{proof}[Proof of the lemma]
Set $w_n=0$ for $n\ge 0$. Since $\sup_{n\in \mathbb Z}|w_n|<+\infty$, it follows from Lemma~\ref{L2} that there exists a sequence $(z_n)_{n\in \mathbb Z}\subset \mathbb R^d$ such that~\eqref{adm} holds and that $\sup_{n\in \mathbb Z}|z_n|<+\infty$. Take $v_1\in Z'$ and $v_2\in \mathcal U$ such that $z_0=v_1+v_2$. Since $v_2\in \mathcal U$, there exists a sequence $(t_n)_{n\le 0}\subset \mathbb R^d$ such that $\sup_{n\le 0}|t_n|<+\infty$, $t_0=v_2$ and $t_{n+1}=A_nt_n$ for $n\le -1$. Set
\[
x_n:=z_n-t_n, \quad n\le 0.
\]
Then, the sequence $(x_n)_{n\le 0}$ has the desired properties. One can easily establish the uniqueness part.
\end{proof}
The previous lemma together with the assumption in the statement of the theorem  implies (see~\cite[Corollary 1.11]{Henry}) that Eq.~\eqref{lde} admits an exponential dichotomy on $\mathbb Z^-$ with respect to projections $P_n^-$, $n\in \mathbb Z^-$ such that 
$\Ima P_0^-=Z'$.  Observe that $\Ker P_0^+=Z\subset \mathcal U=\Ker P_0^-$ and $\Ima P_0^-=Z'\subset \mathcal S=\Ima P_0^+$. This easily implies that~\eqref{pro} holds.
We conclude that Eq.~\eqref{lde} admits an exponential trichotomy.

$(b)\implies (a)$
This implication is established in~\cite[Corollary 1]{BD}.
\end{proof}

\begin{remark}
The implication $(a)\implies (b)$ of Theorem~\ref{T} has been established in~\cite[Proposition 3]{BD} in the case when $A_n$ is an invertible operator for every $n\in \mathbb Z$. In this case it is not necessary to  make the assumption from Theorem~\ref{T}.
\end{remark}

\begin{example}\label{EX1}
Let us now give an explicit example (inspired by~\cite[Remark 2]{DP}) which illustrates that the assumption we made in the statement of Theorem~\ref{T} cannot be eliminated.  For this purpose,
take $d=1$ and consider a sequence $(A_n)_{n\in \mathbb Z}$ by 
\[
A_n=\begin{cases}
0 &n\ge -1 \\
2 &n<-1.
\end{cases}
\]
We claim that the associated equation Eq.~\eqref{lde} is Hyers-Ulam stable. To this end, let 
\[
\ell^\infty:=\left \{w=(w_n)_{n\in \mathbb Z}\subset \mathbb R: \ \|w\|_\infty:= \sup_{n\in \mathbb Z}|w_n|<+\infty \right \}.
\]
Then, $(\ell^\infty, \| \cdot \|_\infty)$ is a Banach space. We define $\Gamma \colon \ell^\infty \to \ell^\infty$ by 
\[
(\Gamma w)_n:=\begin{cases}
w_{n-1} &n\ge 0; \\
0 &n=-1; \\
-\sum_{i=1}^{-(n+1)}\frac{1}{2^i}w_{n+i-1} &n\le -2,
\end{cases}
\]
for $w=(w_n)_{n\in \mathbb Z}\in \ell^\infty$. Clearly, $\|\Gamma w\|_\infty \le \|w\|_\infty$. Moreover, it is easy to verify that 
\begin{equation}\label{j1}
(\Gamma w)_{n+1}=A_n (\Gamma w)_n+w_n, \quad n\in \mathbb Z.
\end{equation}
Take now a sequence $(y_n)_{n\in \mathbb Z}\subset \mathbb R$ such that   $\sup_{n\in \mathbb Z}|y_{n+1}-A_n y_n| \le \delta$ for some $\delta>0$. Set 
\begin{equation}\label{j2}
w_n:=y_{n+1}-A_n y_n, \quad n\in \mathbb Z.
\end{equation}
Furthermore, let $w:=(w_n)_{n\in \mathbb Z}\in \ell^\infty$ and 
\[
x_n:=y_n-(\Gamma w)_n, \quad n\in \mathbb Z.
\]
By~\eqref{j1} and~\eqref{j2}, we have that the sequence $(x_n)_{n\in \mathbb Z}$ is a solution of Eq.~\eqref{lde}. Moreover, 
\[
\sup_{n\in \mathbb Z}|x_n-y_n|=\| \Gamma w\|_\infty \le \|w\|_\infty \le \delta. 
\]
Thus, Eq.~\eqref{lde} is Hyers-Ulam stable. On the other hand, Eq.~\eqref{lde} does not admit exponential trichotomy as it does not admit exponential dichotomy on $\mathbb Z^-$.
\end{example}

We can also give a complete characterization of Hyers-Ulam stability with uniqueness. 
\begin{theorem}\label{99}
The following properties are equivalent:
\begin{enumerate}
\item Eq.~\eqref{lde} is Hyers-Ulam stable with uniqueness;
\item Eq.~\eqref{lde} admits an exponential dichotomy on $\mathbb Z$.
\end{enumerate}
\end{theorem}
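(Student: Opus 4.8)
The plan is to derive Theorem~\ref{99} from Theorem~\ref{T} by isolating the one extra ingredient that turns an exponential trichotomy into an exponential dichotomy on $\mathbb Z$: the absence of a nonzero bounded solution of Eq.~\eqref{lde} on the whole line. Two quick observations set this up. First, if Eq.~\eqref{lde} is Hyers--Ulam stable with uniqueness and $(u_n)_{n\in\mathbb Z}$ is a bounded solution of Eq.~\eqref{lde}, then, with $L$ as in Definition~\ref{HUU} and taking $y\equiv 0$ and any $\delta>0$, both the zero sequence and $(tu_n)_{n\in\mathbb Z}$ (for all sufficiently small $t>0$) are solutions of Eq.~\eqref{lde} lying within distance $L\delta$ of $y$, so uniqueness forces $u\equiv 0$. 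In particular, a bounded backward solution with $x_0=0$, extended by zero, would be such a solution, so the hypothesis of Theorem~\ref{T} holds automatically. Second, Hyers--Ulam stability with uniqueness trivially implies Hyers--Ulam stability, so Theorem~\ref{T} yields an exponential trichotomy with projections $P_n^+$ ($n\ge 0$) and $P_n^-$ ($n\le 0$); from its proof one has $\Ima P_0^+=\mathcal S$, $\Ker P_0^-=\mathcal U$, and $\mathbb R^d=\mathcal S+\mathcal U$.

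For $(1)\Rightarrow(2)$ it then suffices to show that the sum $\mathcal S+\mathcal U$ is direct. If $v\in\mathcal S\cap\mathcal U$, glue the bounded forward orbit $(\mathcal A(n,0)v)_{n\ge 0}$ with a bounded backward extension of $v$ to obtain a bounded solution of Eq.~\eqref{lde} on $\mathbb Z$; by the first observation it vanishes, so $v=0$. Hence $\mathbb R^d=\mathcal S\oplus\mathcal U$, and a dimension count together with the inclusions $\Ker P_0^+\subseteq\mathcal U$ and $\Ima P_0^-\subseteq\mathcal S$ coming from~\eqref{pro} forces $\Ker P_0^+=\mathcal U=\Ker P_0^-$ and $\Ima P_0^-=\mathcal S=\Ima P_0^+$, so $P_0^+=P_0^-$. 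Setting $P_n:=P_n^+$ for $n\ge 0$ and $P_n:=P_n^-$ for $n\le 0$ then gives a well-defined family of projections; the invariance relations and the invertibility of $A_n|_{\Ker P_n}$ hold piecewise, the two one-sided exponential estimates are inherited on $\{m\ge n\ge 0\}$ and $\{m\le n\le 0\}$, and for mixed indices one writes, for instance, $\mathcal A(m,n)P_n=\bigl(\mathcal A(m,0)P_0^+\bigr)\bigl(\mathcal A(0,n)P_n^-\bigr)$ when $m\ge 0\ge n$ and multiplies the two decay bounds (replacing $D$ by $D^2$), and symmetrically for the unstable estimate when $m\le 0\le n$. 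This is an exponential dichotomy on $\mathbb Z$.

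For $(2)\Rightarrow(1)$, an exponential dichotomy on $\mathbb Z$ is in particular an exponential trichotomy, so the implication $(b)\Rightarrow(a)$ of Theorem~\ref{T} (i.e.~\cite[Corollary 1]{BD}) already gives Hyers--Ulam stability with some constant $L$; only uniqueness has to be added. Suppose $(x_n)$ and $(\tilde x_n)$ are solutions of Eq.~\eqref{lde} with $\sup_n|x_n-y_n|\le L\delta$ and $\sup_n|\tilde x_n-y_n|\le L\delta$; then $u_n:=x_n-\tilde x_n$ is a bounded solution of Eq.~\eqref{lde} on $\mathbb Z$, and I would show $u\equiv 0$ directly from the dichotomy estimates. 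Using $u_n=\mathcal A(n,0)u_0$ for $n\ge 0$, the stable bound makes $\mathcal A(n,0)P_0u_0\to 0$, hence $(\Id-P_n)u_n$ stays bounded, and $(\Id-P_0)u_0=\mathcal A(0,n)(\Id-P_n)u_n$ together with $\|\mathcal A(0,n)(\Id-P_n)\|\le De^{-\lambda n}$ forces $(\Id-P_0)u_0=0$; symmetrically, $P_0u_0=\mathcal A(0,n)P_nu_n$ for $n\le 0$ with $\|\mathcal A(0,n)P_n\|\le De^{\lambda n}$ forces $P_0u_0=0$, so $u_0=0$ and $u_n=0$ for all $n\ge 0$. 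For $n\le 0$, injectivity of $\mathcal A(0,n)|_{\Ker P_n}$ gives $(\Id-P_n)u_n=0$, and then for $m\le n\le 0$ the inequality $|u_n|=|\mathcal A(n,m)P_mu_m|\le De^{-\lambda(n-m)}|u_m|$ shows $|u_m|$ would blow up as $m\to-\infty$ unless $u_n=0$; since $(u_m)$ is bounded, $u_n=0$. Thus $u\equiv 0$, and the solution in Definition~\ref{HUU} is unique.

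The main obstacle is this last step: under an exponential dichotomy the vanishing of bounded full-line solutions is immediate when the coefficients are invertible, but here $A_n$ need not be injective on the stable fibres, so the backward part must be run through the lower bound $|u_m|\ge D^{-1}e^{\lambda(n-m)}|u_n|$, exploiting that the dichotomy norms of the stable fibres grow exponentially in backward time. Secondary technical points are the bookkeeping of the mixed-index exponential estimates when gluing the two one-sided dichotomies and the dimension count upgrading the inclusions in~\eqref{pro} to the identity $P_0^+=P_0^-$.
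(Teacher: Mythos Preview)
Your proof is correct and follows essentially the same strategy as the paper: reduce to Theorem~\ref{T}, use uniqueness to kill any nonzero bounded full-line solution, and thereby collapse the trichotomy to a dichotomy at $n=0$. You are in fact more careful than the paper on two points it leaves implicit---you explicitly verify the backward-uniqueness hypothesis of Theorem~\ref{T} before invoking it, and you spell out both the mixed-index gluing of the two one-sided dichotomies and the argument that an exponential dichotomy forces bounded solutions to vanish (the paper simply asserts the latter).
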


\begin{proof}
Suppose that Eq.~\eqref{lde} is Hyers-Ulam stable with uniqueness. By Theorem~\ref{T}, we have that Eq.~\eqref{lde} admits an exponential trichotomy. Hence, Eq.~\eqref{lde} admits an exponential dichotomy on $\mathbb Z^+$ with projections $P_n^+$ and exponential dichotomy on $\mathbb Z^-$ with projections $P_n^-$. Furthermore, \eqref{pro} holds. Hence, $\Ima P_0^-\subset \Ima P_0^+$. Take now $v\in \Ima P_0^+$ and write it in the form $v=v_1+v_2$, where $v_1\in \Ima P_0^-$ and $v_2\in \Ker P_0^-$. Note that $v_2=v-v_1\in \Ima P_0^+$. Moreover, since $v_2\in \Ker P_0^-$, there exists a sequence $(x_n)_{n\in \mathbb Z^-}\subset \mathbb R^d$ such that $x_0=v_2$, $x_n=A_{n-1}x_{n-1}$ for $n\le 0$ and $\sup_{n\le 0}|x_n|<+\infty$. For $n\in \mathbb Z$, set
\[
\tilde x_n=\begin{cases}
\mathcal A(n,0)v_2 &n\ge 0; \\
x_n &n<0.
\end{cases}
\]
Clearly, $(\tilde x_n)_{n\in \mathbb Z}$ is a solution of Eq.~\eqref{lde} and $M:=\sup_{n\in \mathbb Z}|\tilde x_n|<+\infty$. Finally, we introduce a sequence $(y_n)_{n\in \mathbb Z}\subset \mathbb R^d$ by $y_n=\frac{L}{M}\tilde x_n$, $n\in \mathbb Z$. Note that the sequence $(y_n)_{n\in \mathbb Z}$ is a solution of Eq.~\eqref{lde}. In particular, \eqref{pse} holds with $\delta=1$. Hence, $(y_n)_{n\in \mathbb Z}$ is $L$-shadowed by itself and the constant solution $(0)_{n\in \mathbb Z}$ of Eq.~\eqref{lde}. By the uniqueness in Definition~\ref{HUU}, we conclude that $y_n=0$ for each $n\in \mathbb Z$. Hence, $\tilde x_0=v_2=0$. Consequently, $v=v_1\in \Ima P_0^-$ and we conclude that $\Ima P_0^-=\Ima P_0^+$. Since $\Ker P_0^+\subset \Ker P_0^-$, we have that $\Ker P_0^+=\Ker P_0^-$. This yields that $P_0^-=P_0^+$ and therefore Eq.~\eqref{lde} admits an exponential dichotomy.

Suppose that Eq.~\eqref{lde} admits an exponential dichotomy. By Theorem~\ref{T}, we have that Eq.~\eqref{lde} is Hyers-Ulam stable. It remains to establish the uniqueness. Suppose that for a sequence $(y_n)_{n\in \mathbb Z}\subset \mathbb R^d$ satisfying~\eqref{pse} with some $\delta>0$, there are two solutions $(x_n)_{n\in \mathbb Z}$ and $(\tilde x_n)_{n\in \mathbb Z}$ of Eq.~\eqref{lde} such that 
\[
\sup_{n\in \mathbb Z}|x_n-y_n| \le L\delta \quad \text{and} \quad \sup_{n\in \mathbb Z}|\tilde x_n-y_n| \le L\delta.
\]
Set $w_n:=x_n-\tilde x_n$, $n\in \mathbb Z$. Then, $(w_n)_{n\in \mathbb Z}$ is a solution of Eq.~\eqref{lde} such that $\sup_{n\in \mathbb Z}|w_n|<+\infty$. Since Eq.~\eqref{lde} admits an exponential dichotomy, we have that $w_n=0$ for each $n\in \mathbb Z$. We conclude that $x_n=\tilde x_n$ for $n\in \mathbb Z$. The proof of the theorem is completed.
\end{proof}

\section{Continuous time case}
Let $A\colon \mathbb R\to  \mathbb R^{d\times d}$ be a continuous map. We consider the associated linear differential equation given by
\begin{equation}\label{LDE}
x'=A(t)x, \quad t\in \mathbb R.
\end{equation}
By $T(t,s)$ we denote the evolution family associated to~\eqref{LDE}.
\begin{definition}\label{HU-c}
We say that Eq.~\eqref{LDE} is \emph{Hyers-Ulam stable} if there exists $L>0$ with the property that for each $\delta >0$ and a continuously differentiable function $y\colon \mathbb R \to \mathbb R^d$ satisfying
\begin{equation}\label{pse-c}
\sup_{t\in \mathbb R}|y'(t)-A(t)y(t)| \le \delta, 
\end{equation}
there exists a solution $x\colon \mathbb R\to \mathbb R^d$ of Eq.~\eqref{LDE} such that
\begin{equation}\label{approx-c}
\sup_{t\in \mathbb R}|x(t)-y(t)| \le L\delta.
\end{equation}
\end{definition}
Similarly, one can introduce the notion of \emph{Hyers-Ulam stability with uniqueness} for Eq.~\eqref{LDE} by requiring that $x$ in Definition~\ref{HU-c} is unique.
\begin{definition}
Let $J\in \{\mathbb R^+, \mathbb R^-, \mathbb R\}$, where $\mathbb R^+=[0, \infty)$ and $\mathbb R^-=(-\infty, 0]$. We say that Eq.~\eqref{LDE} admits a \emph{summable dichotomy} on $J$ if there exist a family of projections $P(t)$, $t\in J$ on $\mathbb R^d$ and a  constant $K>0$ such that the following holds:
\begin{itemize}
\item for $t, s\in J$, $T(t,s)P(s)=P(t)T(t,s)$;
\item for $t\in J$,
\begin{equation}\label{sum}
\int_{\inf J}^t\|T(t,s)P(s)\| \, ds+\int_t^{\sup J} \|T(t,s)Q(s)\|\, ds \le K,
\end{equation}
where $Q(s)=\Id-P(s)$.
\end{itemize}
\end{definition}

\begin{remark}\label{uu}
We recall that~\eqref{LDE}  admits an exponential dichotomy on $J$  if there exist constants $D, \lambda>0$ such that 
\begin{equation}\label{ed1}
\|T(t,s)P(s)\| \le De^{-\lambda (t-s)} \quad \text{for $t, s\in J$ with $t\ge s$,}
\end{equation}
and 
\begin{equation}\label{ed2}
\|T(t,s)Q(s)\| \le De^{-\lambda (s-t)} \quad \text{for $t, s\in J$ with $t\le s$.}
\end{equation}
Clearly, exponential dichotomy is a particular case of a summable dichotomy (as~\eqref{ed1} and~\eqref{ed2} imply~\eqref{sum}).

Take now a continuously differentiable function $\phi \colon [0, \infty) \to \mathbb R$ such that $0<\phi(t)\le 1$ for $t\ge 0$, $\int_0^\infty \left (\frac{1}{\phi(t)}-1\right )\, dt=1$ and $\lim_{n\to \infty}\frac{\phi(n)}{\phi(n-2^{-n})}=\infty$. We consider the equation
\begin{equation}\label{Copp}
x'=\left (\frac{\phi'(t)}{\phi(t)}-1\right )x.
\end{equation}
It is proved in~\cite[p.27]{Coppel2} that~\eqref{Copp} admits a summable dichotomy (with $P(t)=\Id$) but does not admit an exponential dichotomy.
\end{remark}

\begin{remark}\label{ct}
It is observed in~\cite[p.134]{Coppel} the following holds true:
\begin{itemize}
\item if Eq.~\eqref{LDE} admits a summable dichotomy on $\mathbb R^+$ with respect to projections $P(t)$, $t\ge 0$, then
\[
\Ima P(0)=\left \{ v\in \mathbb R^d: \ \sup_{t\ge 0} |T(t,0)v|<+\infty\right \}=:\mathcal S;
\]
Furthermore, $\Ker P(0)$ can be an arbitrary subspace of $\mathbb R^d$ complemented to $\mathcal S$;
\item if Eq.~\eqref{LDE} admits a summable dichotomy on $\mathbb R^-$ with respect to projections $P(t)$, $t\le 0$, then
\[
\Ker P(0)=\left \{ v\in \mathbb R^d: \ \sup_{t\le 0} |T(t,0)v|<+\infty\right \}=:\mathcal U;
\]
Furthermore, $\Ima P(0)$ can be an arbitrary subspace of $\mathbb R^d$ complemented to $\mathcal U$.
\end{itemize}
\end{remark}

\begin{definition}\label{ST}
We say that Eq.~\eqref{LDE} admits a \emph{summable trichotomy} if the following conditions hold:
\begin{itemize}
\item Eq.~\eqref{LDE} admits a summable dichotomy on $\mathbb R^+$ with respect to projections $P^+(t)$, $t\ge 0$;
\item Eq.~\eqref{LDE} admits a summable dichotomy on $\mathbb R^-$ with respect to projections $P^-(t)$, $t\le 0$;
\item we have that 
\begin{equation}\label{pro-c}
P^-(0)=P^-(0)P^+(0)=P^+(0)P^-(0).
\end{equation}
\end{itemize}
\end{definition}

\begin{remark}\label{ET}
We recall (see~\cite{EH}) that Eq.~\eqref{LDE} admits an \emph{exponential trichotomy} if:
\begin{itemize}
\item Eq.~\eqref{LDE} admits an exponential  dichotomy on $\mathbb R^+$ with respect to projections $P^+(t)$, $t\ge 0$;
\item Eq.~\eqref{LDE} admits an exponential dichotomy on $\mathbb R^-$ with respect to projections $P^-(t)$, $t\le 0$;
\item \eqref{pro-c} holds.
\end{itemize}
It follows from the discussion in Remark~\ref{uu} that the notion of exponential trichotomy is a particular case of the notion of a summable trichotomy.
\end{remark}

\begin{theorem}\label{yyy}
The following statements are equivalent:
\begin{enumerate}
\item[(a)] Eq.~\eqref{LDE} is Hyers-Ulam stable;
\item[(b)] Eq.~\eqref{LDE} admits  a summable trichotomy.
\end{enumerate}
\end{theorem}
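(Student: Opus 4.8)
The plan is to follow the same architecture as in the discrete case (Theorem~\ref{T}), replacing the use of exponential-dichotomy admissibility theorems with their summable-dichotomy counterparts. For the implication $(a)\implies(b)$, I would first prove the continuous analogue of Lemma~\ref{L1} and Lemma~\ref{L2}: given a bounded continuous inhomogeneity $w\colon\mathbb R\to\mathbb R^d$, there is a bounded solution $z$ of $z'=A(t)z+w(t)$ with $\sup_t|z(t)|\le L\sup_t|w(t)|$. The argument is essentially identical — set $y(t)=\int_{-m}^t T(t,r)w(r)\,dr$ for $t\ge -m$ and $y(t)=0$ for $t\le -m$ (this $y$ is $C^1$ since $w$ and $T$ are continuous), observe $\sup_t|y'(t)-A(t)y(t)|=\sup_{t\ge -m}|w(t)|$, apply Hyers-Ulam stability to get an exact solution $x$ with $\sup_t|x(t)-y(t)|\le L\sup_{t\ge-m}|w(t)|$, and set $z=y-x$; then pass $m\to\infty$ using Arzel\`a--Ascoli or a diagonal/compactness argument on $\mathbb R^d$ to obtain a bounded solution on all of $\mathbb R$.

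Next I would define $\mathcal S$ and $\mathcal U$ exactly as in Remark~\ref{ct} (the bounded-forward and bounded-backward subspaces at $t=0$), and show $\mathbb R^d=\mathcal S+\mathcal U$ by feeding into Lemma~\ref{L2} (continuous version) an inhomogeneity concentrated near $t=0$ — for instance a bump function $w(t)=\psi(t)v$ supported in a small neighbourhood of $0$ with $\int\psi=1$; the resulting bounded $z$ gives a bounded forward solution from $z(0^-)$-side and a bounded backward solution, and the "jump" $\int w = v$ lands in $\mathcal S+\mathcal U$. (A cleaner route: take $w$ supported in $[-1,0]$, so $z$ is an honest bounded solution on $[0,\infty)$ giving $z(0)\in\mathcal S$, and $t\mapsto z(t)$ on $(-\infty,-1]$ continued by the flow gives a bounded backward orbit, placing $z(-1)$, hence $z(0)-\int_{-1}^0 T(0,r)w(r)\,dr$, appropriately.) Then I choose complements $Z\subset\mathcal U$ with $\mathbb R^d=\mathcal S\oplus Z$ and $Z'\subset\mathcal S$ with $\mathbb R^d=Z'\oplus\mathcal U$, and prove the continuous admissibility statements: for every bounded continuous $w$ on $\mathbb R^+$ there is a \emph{unique} bounded solution on $\mathbb R^+$ of $x'=A(t)x+w(t)$ with $x(0)\in Z$, and symmetrically on $\mathbb R^-$ with $x(0)\in Z'$. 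Uniqueness follows because the difference of two such solutions is $T(t,0)(x(0)-\tilde x(0))$ with $x(0)-\tilde x(0)\in\mathcal S\cap Z=\{0\}$ (resp.\ $Z'\cap\mathcal U=\{0\}$); note the continuous problem needs no invertibility hypothesis since $T(t,s)$ is automatically invertible, which is why no analogue of the standing assumption in Theorem~\ref{T} appears here.

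The step I expect to be the main obstacle is converting these admissibility properties into a \emph{summable} dichotomy (rather than merely an exponential one). I would invoke a Perron-type theorem characterizing summable dichotomy on a half-line via admissibility of bounded inhomogeneities — this is exactly the content of Coppel's discussion cited in Remark~\ref{ct} and in~\cite{Coppel}; concretely, boundedness of the solution operator $w\mapsto x$ from $L^\infty$ to $L^\infty$ with the prescribed initial condition, together with finite dimensionality, forces the Green's function $G(t,s)$ to satisfy $\int_0^\infty\|G(t,s)\|\,ds\le K$ uniformly in $t$ (and symmetrically), which is precisely~\eqref{sum}. Once summable dichotomies on $\mathbb R^+$ (with $\Ker P^+(0)=Z$) and on $\mathbb R^-$ (with $\Ima P^-(0)=Z'$) are obtained, the trichotomy relation~\eqref{pro-c} follows just as in the discrete proof: $\Ker P^+(0)=Z\subset\mathcal U=\Ker P^-(0)$ and $\Ima P^-(0)=Z'\subset\mathcal S=\Ima P^+(0)$ force $P^-(0)=P^-(0)P^+(0)=P^+(0)P^-(0)$.

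For the converse $(b)\implies(a)$, I would argue directly: given a summable trichotomy, build the Green's function $G(t,s)$ on $\mathbb R^+$ from $P^+$, on $\mathbb R^-$ from $P^-$, and glue using~\eqref{pro-c} to get a bounded operator $(\Gamma w)(t)=\int_{\mathbb R}G(t,s)w(s)\,ds$ with $\|\Gamma w\|_\infty\le C\|w\|_\infty$ (the constant $C$ coming from the summability constants $K$ and the dimension-independent bound $\sup\|T(t,0)\mid_{\text{central part}}\|$, where the central subspace $\Ima P^-(0)$ is finite-dimensional so the flow on it is trivially bounded on compact sets — but here one must check it is bounded on all of $\mathbb R$, which follows from it lying in both $\mathcal S$ and $\mathcal U$). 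Then, given $y$ with $\sup_t|y'(t)-A(t)y(t)|\le\delta$, set $w=y'-A(\cdot)y$ and $x=y-\Gamma w$; a short computation shows $x$ solves~\eqref{LDE} and $\sup_t|x(t)-y(t)|=\|\Gamma w\|_\infty\le C\delta$, giving Hyers-Ulam stability with $L=C$. This half is routine once the Green's function estimates are in place; the only care needed is the boundedness of the evolution restricted to the central subspace, which I would extract from membership in $\mathcal S\cap\mathcal U$ together with the two summable dichotomies.
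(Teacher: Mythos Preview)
Your overall architecture matches the paper's proof closely, but you over-engineer the first step and introduce a small technical defect in doing so. In the continuous setting the analogue of Lemma~\ref{L2} is far simpler than in the discrete case: since the evolution $T(t,s)$ is defined for all $t,s\in\mathbb R$, you can take \emph{any} solution $y$ of $y'=A(t)y+z(t)$ on all of $\mathbb R$ (e.g.\ $y(t)=\int_0^t T(t,r)z(r)\,dr$), observe that $\sup_t|y'(t)-A(t)y(t)|=\|z\|_\infty$, apply Hyers-Ulam stability directly, and set $x=y-\tilde x$. No truncation at $-m$, no diagonal argument. Your proposed truncated $y$ (with $y\equiv 0$ on $(-\infty,-m]$) is \emph{not} $C^1$ at $t=-m$: the right derivative there is $w(-m)$ and the left derivative is $0$, so you cannot invoke Definition~\ref{HU-c} as stated. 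This is fixable (smooth cutoff of $w$, or the one-line argument above), but as written it is a gap.

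A second unnecessary complication: you set up the half-line admissibility lemmas with a uniqueness clause ($x(0)\in Z$, resp.\ $x(0)\in Z'$), mirroring the discrete Lemma~\ref{L3}. The paper does not do this: Coppel's theorem \cite[Theorem~1, p.~131]{Coppel} converts mere \emph{existence} of a bounded solution for every bounded continuous inhomogeneity into a summable dichotomy on the half-line, and Remark~\ref{ct} then lets one freely choose the complementary subspace to be $Z$ (resp.\ $Z'$). So the uniqueness step, while harmless, is redundant.

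For $(b)\implies(a)$ your sketch is essentially the paper's construction. The ``bounded on the central part'' issue you flag is handled in the paper by citing \cite[Lemma~1, p.~68]{Coppel}, which gives a uniform bound $\|T(t,0)P^+(0)\|\le N$ for $t\ge 0$ (and similarly for $P^-(0)$ on $t\le 0$) directly from the summable dichotomy; your route via membership in $\mathcal S$ and $\mathcal U$ plus finite-dimensionality would also work, but is less direct.
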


\begin{proof}
$(a)\implies (b)$
Suppose that  Eq.~\eqref{LDE} is Hyers-Ulam stable. 
\begin{lemma}\label{09}
For each continuous function $z\colon \mathbb R\to \mathbb R^d$ such that $\sup_{t\in \mathbb R}|z(t)|<+\infty$, there exists  a continuously differentiable $x\colon \mathbb R\to \mathbb R^d$ satisfying
\begin{equation}\label{ADM}
x'(t)=A(t)x(t)+z(t) \quad t\in \mathbb R,
\end{equation}
and \begin{equation}\label{L}\sup_{t\in \mathbb R}|x(t)|\le L\sup_{t\in \mathbb R}|z(t)|.\end{equation}
\end{lemma}
\begin{proof}[Proof of the lemma]
Set $\|z\|_\infty:=\sup_{t\in \mathbb R}|z(t)|<+\infty$, and let $y\colon \mathbb R\to \mathbb R^d$ be an arbitrary solution of the equation
\[
y'(t)=A(t)y(t)+z(t), \quad t\in \mathbb R.
\]
Then, $\sup_{t\in \mathbb R}|y'(t)-A(t)y(t)|=\|z\|_\infty$. Hence, there exists a solution $\tilde x\colon \mathbb R\to \mathbb R^d$ of Eq.~\eqref{LDE} such that 
$\sup_{t\in \mathbb R}|y(t)-\tilde x(t)| \le L\|z\|_\infty$. Set $x(t):=y(t)-\tilde x(t)$, $t\in \mathbb R$. Clearly, $x$ has the desired properties. 
\end{proof}

\begin{lemma}\label{pp}
We have that 
\begin{equation}\label{split-c}
\mathbb R^d=\mathcal S\oplus \mathcal U.
\end{equation}
\end{lemma}
\begin{proof}[Proof of the lemma]
Take $v\in \mathbb R^d$. Choose a nonnegative continuous function $\phi \colon \mathbb R\to \mathbb R$ whose support is contained in $[0, 1]$ and such that $\int_0^1\phi(s)\, ds=1$. Let
\[
z(t)=\phi(t)T(t, 0)v, \quad t\in \mathbb R.
\]
Clearly, $z$ is continuous and $\sup_{t\in \mathbb R}|z(t)|<+\infty$. By Lemma~\ref{09}, there exists a continuously differentiable function $x\colon \mathbb R\to \mathbb R^d$ satisfying~\eqref{ADM} and such that $\sup_{t\in \mathbb R}|x(t)|<+\infty$. Observe that $x(t)=T(t,0)x(0)$ for $t\le 0$, and thus $x(0)\in \mathcal U$. Moreover, for $t\ge 1$ we have that 
\[
\begin{split}
x(t) &=T(t,0)x(0)+\int_0^t \phi(s) T(t,s)T(s, 0)v\, ds\\
&=T(t,0)x(0)+\left (\int_0^1\phi(s)\, ds \right )T(t,0)v \\
&=T(t,0)(x(0)+v).
\end{split}
\]
Hence, $x(0)+v\in \mathcal S$. We conclude that
\[
v=x(0)+v-x(0)\in \mathcal S+\mathcal U.
\]
\end{proof}
The following lemma is a direct consequence of Lemma~\ref{09}.
\begin{lemma}\label{10}
The following holds:
\begin{itemize}
\item for each continuous function $z\colon [0, \infty) \to \mathbb R^d$ such that $\sup_{t\ge 0}|z(t)|<+\infty$, there exists  a continuously differentiable $x\colon [0, \infty)\to \mathbb R^d$ satisfying
\begin{equation}\label{adm-pl}
x'(t)=A(t)x(t)+z(t) \quad t\ge 0,
\end{equation}
and $\sup_{t\ge 0}|x(t)| \le L\sup_{t\ge 0}|z(t)|$;
\item  for each continuous function $z\colon (-\infty, 0] \to \mathbb R^d$ such that $\sup_{t\le 0}|z(t)|<+\infty$, there exists  a continuously differentiable $x\colon (-\infty, 0]\to \mathbb R^d$ satisfying
\begin{equation}\label{adm-nl}
x'(t)=A(t)x(t)+z(t) \quad t\le 0,
\end{equation}
and $\sup_{t\le 0}|x(t)| \le L\sup_{t\le 0}|z(t)|$.
\end{itemize}

\end{lemma}
By~\eqref{split-c}, there exist subspaces $Z\subset \mathcal U$ and $Z'\subset \mathcal S$ such that 
\begin{equation}\label{RZZ}
\mathbb R^d=\mathcal S\oplus Z \quad \text{and} \quad \mathbb R^d=Z'\oplus \mathcal U.
\end{equation}
It follows from Lemma~\ref{10}, Remark~\ref{ct} and~\cite[Theorem 1, p.131]{Coppel} that Eq.~\eqref{LDE} admits a summable dichotomy on $\mathbb R^+$ with respect to projections $P^+(t)$, $t\ge 0$ and  a summable dichotomy on $\mathbb R^-$ with respect to projections $P^-(t)$, $t\le 0$. Moreover, $\Ker P^+(0)=Z$ and $\Ima P^-(0)=Z'$. This easily implies that~\eqref{pro-c} holds. Therefore, Eq.~\eqref{LDE}  admits a summable trichotomy.

$(b)\implies (a)$ Suppose that Eq.~\eqref{LDE} admits a summable trichotomy. By~\cite[Lemma 1, p.68]{Coppel}, there exists $N\ge 1$ such that $\|T(t,0)P^+(0)\| \le N$ for $t\ge 0$ and $\|T(t,0)P^-(0)\| \le N$ for $t\le 0$.
We need the following auxiliary  result.
\begin{lemma}
There exists $L>0$ with the property that for each continuous $z\colon \mathbb R\to \mathbb R^d$ such that $\sup_{t\in \mathbb R}|z(t)|<+\infty$, there exists  a continuously differentiable $x\colon \mathbb R\to \mathbb R^d$ sastifying~\eqref{ADM} and~\eqref{L}.
\end{lemma}
\begin{proof}[Proof of the lemma]
For $t\ge 0$, let 
\[
x_1(t)=\int_0^tT(t,s)P^+(s)z(s)\, ds-\int_t^\infty T(t,s)Q^+(s)z(s)\, ds,
\]
where $Q^+(s)=\Id-P^+(s)$. Then, \eqref{sum} implies that $\sup_{t\ge 0}|x_1(t)| \le K\sup_{t\ge 0}|z(t)|$. Observe that
\begin{equation}\label{ab}
x_1'(t)=A(t)x_1(t)+z(t) \ \text{for $t>0$} \ \text{and}  \ x_1'(0+)=A(0)x_1(0)+z(0),
\end{equation}
where $x_1'(0+)$ denotes the right-derivative of $x_1$ at $0$. On the other hand, observe that $x_1(0)\in \Ker P^+(0)\subset \Ker P^-(0)$. Hence, by setting $x_1(t):=T(t,0)x_1(0)$ for $t<0$ we can extend $x_1$ to a bounded function on $\mathbb R$. In fact, $|x_1(t)| \le N|x_1(0)|$ for $t\le 0$ and thus (assuming without any loss of generality that $K\ge 1$)
\begin{equation}\label{x1}
\sup_{t\in \mathbb R}|x_1(t)| \le NK\sup_{t\ge 0}|z(t)|.
\end{equation}
In addition, 
\begin{equation}\label{ab1}
x_1'(t)=A(t)x_1(t)\ \text{for $t<0$} \ \text{and}  \ x_1'(0-)=A(0)x_1(0),
\end{equation}
where $x_1'(0-)$ denotes the left-derivative of $x_1$ at $0$.

For $t\le 0$, let 
\[
x_2(t)=\int_{-\infty}^tT(t,s)P^-(s)z(s)\, ds-\int_t^0 T(t,s)Q^-(s)z(s)\, ds,
\]
where $Q^-(s)=\Id-P^-(s)$. Then, \eqref{sum} implies that $\sup_{t\le 0}|x_2(t)| \le K\sup_{t< 0}|z(t)|$. Observe that
\begin{equation}\label{ab2}
x_2'(t)=A(t)x_2(t)+z(t) \ \text{for $t<0$} \ \text{and}  \ x_2'(0-)=A(0)x_2(0)+z(0).
\end{equation}
Moreover, $x_2(0)\in \Ima P^-(0)\subset \Ima P^+(0)$. Hence, by setting $x_2(t):=T(t,0)x_2(0)$ for $t>0$ we can extend $x_2$ to a bounded function on $\mathbb R$. Moreover, similarly to~\eqref{x1}, we have that 
\begin{equation}\label{x2}
\sup_{t\in \mathbb R}|x_2(t)| \le NK\sup_{t< 0}|z(t)|.
\end{equation}
In addition, 
\begin{equation}\label{ab3}
x_2'(t)=A(t)x_2(t)\ \text{for $t>0$} \ \text{and}  \ x_2'(0+)=A(0)x_2(0).
\end{equation}
Set $x:=x_1+x_2$. Then,  \eqref{ab}, \eqref{ab1}, \eqref{ab2} and~\eqref{ab3} imply that~\eqref{ADM} holds. Finally, \eqref{x1} and~\eqref{x2} yield that~\eqref{L} holds with $L:=2NK$.
The proof of the lemma is completed.
\end{proof}
Take $\delta>0$ and let  $y\colon \mathbb R\to \mathbb R^d$ be a continuously differentiable function such that~\eqref{pse-c} holds. It follows from the previous lemma that the equation
\[
\tilde x'(t)=A(t)\tilde x(t)+A(t)y(t)-y'(t) \quad t\in \mathbb R
\]
has a solution with the property that $\sup_{t\in \mathbb R}|\tilde x(t)| \le L\delta$. Then, $x:=\tilde x+y$ is  a  solution of Eq.\eqref{LDE} such that $\sup_{t\in \mathbb R}|x(t)-y(t)| \le L\delta$. The proof of the theorem is completed.
\end{proof}
\begin{remark}
In~\cite{BD}, the version of Theorem~\ref{yyy} was established in the particular case when $\sup_{t\in \mathbb R}\|A(t)\|<+\infty$. It turns out that in that case, Hyers-Ulam stability of Eq.~\eqref{LDE} is equivalent to exponential trichotomy (see Remark~\ref{ET}).
\end{remark}

We now have the following version of Theorem~\ref{99} for continuous time.
\begin{theorem}\label{qq}
The following properties are equivalent:
\begin{enumerate}
\item Eq.~\eqref{LDE} is Hyers-Ulam stable with uniqueness;
\item Eq.~\eqref{LDE} admits a summable dichotomy on $\mathbb R$.
\end{enumerate}
\end{theorem}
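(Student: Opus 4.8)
The plan is to mirror the proof of Theorem~\ref{99}, taking Theorem~\ref{yyy} as the point of departure and exploiting that a summable trichotomy reduces to a summable dichotomy on $\mathbb R$ precisely when its two boundary projections at the origin agree. Throughout I set $\mathcal S=\{v\in\mathbb R^d:\ \sup_{t\ge 0}|T(t,0)v|<+\infty\}$ and $\mathcal U=\{v\in\mathbb R^d:\ \sup_{t\le 0}|T(t,0)v|<+\infty\}$, and let $L$ be the constant from Definition~\ref{HU-c}. The implication $(2)\implies(1)$ is the easy one: a summable dichotomy on $\mathbb R$ restricts to summable dichotomies on $\mathbb R^+$ and $\mathbb R^-$ with the same projection at $0$, so~\eqref{pro-c} holds trivially, Eq.~\eqref{LDE} admits a summable trichotomy, and Theorem~\ref{yyy} yields Hyers-Ulam stability; for uniqueness, Remark~\ref{ct} gives $\Ima P(0)=\mathcal S$ and $\Ker P(0)=\mathcal U$, hence $\mathcal S\cap\mathcal U=\{0\}$, so the only solution of~\eqref{LDE} bounded on $\mathbb R$ is the zero solution, and any two solutions of~\eqref{LDE} within $L\delta$ of a continuously differentiable $y$ satisfying~\eqref{pse-c} differ by a bounded solution and therefore coincide.

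For $(1)\implies(2)$, I would first invoke Theorem~\ref{yyy} to obtain a summable trichotomy: summable dichotomies on $\mathbb R^+$ and $\mathbb R^-$ with projections $P^+(t)$, $t\ge 0$, and $P^-(t)$, $t\le 0$, satisfying~\eqref{pro-c}, and by Remark~\ref{ct} one has $\Ima P^+(0)=\mathcal S$ and $\Ker P^-(0)=\mathcal U$. The key step is to show $P^+(0)=P^-(0)$. Since $P^-(0)P^+(0)=P^-(0)$ by~\eqref{pro-c}, it suffices to prove $\Ima P^+(0)\subseteq\Ima P^-(0)$, for then $P^+(0)v=P^-(0)P^+(0)v=P^-(0)v$ for every $v$. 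Given $v\in\Ima P^+(0)$, write $v=v_1+v_2$ with $v_1=P^-(0)v\in\Ima P^-(0)$ and $v_2=v-v_1\in\Ker P^-(0)=\mathcal U$; since $\Ima P^-(0)\subseteq\Ima P^+(0)$ by~\eqref{pro-c}, also $v_2\in\Ima P^+(0)=\mathcal S$, so $x:=T(\cdot,0)v_2$ is a solution of~\eqref{LDE} bounded on all of $\mathbb R$, say by $M$. If $M=0$ then $v_2=0$; otherwise $y:=\tfrac{L}{M}x$ is a solution of~\eqref{LDE} which in particular satisfies~\eqref{pse-c} with $\delta=1$, and both $y$ and the zero solution are solutions of~\eqref{LDE} lying within $L\delta=L$ of $y$, so the uniqueness clause forces $y\equiv 0$ and $v_2=0$. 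Thus $v=v_1\in\Ima P^-(0)$, and $P^+(0)=P^-(0)=:P(0)$.

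It then remains to glue the two half-line dichotomies. I would set $P(t):=P^+(t)$ for $t\ge 0$ and $P(t):=P^-(t)$ for $t\le 0$; this is consistent at $t=0$ and commutes with $T(t,s)$. To verify~\eqref{sum} on $\mathbb R$, fix $t\ge 0$ (the case $t\le 0$ being symmetric): the integrals $\int_0^t\|T(t,s)P(s)\|\,ds$ and $\int_t^\infty\|T(t,s)Q(s)\|\,ds$ are each at most the summability constant $K$ of the $\mathbb R^+$-dichotomy, while for the remaining ``cross'' term I would use~\eqref{pro-c} to factor $T(t,s)P^-(s)=\bigl(T(t,0)P^+(0)\bigr)\bigl(T(0,s)P^-(s)\bigr)$ for $s\le 0$, estimate $\|T(t,0)P^+(0)\|\le N$ uniformly via~\cite[Lemma 1, p.68]{Coppel}, and invoke $\int_{-\infty}^0\|T(0,s)P^-(s)\|\,ds\le K$ from the $\mathbb R^-$-dichotomy, obtaining $\int_{-\infty}^0\|T(t,s)P(s)\|\,ds\le NK$. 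Then~\eqref{sum} holds on $\mathbb R$ with constant $(N+2)K$, so Eq.~\eqref{LDE} admits a summable dichotomy on $\mathbb R$.

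The hard part will be precisely this gluing step: in contrast with the exponential case, where exponential decay on the two half-lines composes automatically, one has to control the cross term $\int_{-\infty}^0\|T(t,s)P(s)\|\,ds$ for $t\ge 0$, and this is exactly where the trichotomy identity~\eqref{pro-c} (equivalently $P^+(0)=P^-(0)$) together with the uniform bounds of Coppel come in; the rest is routine bookkeeping, including the harmless case $M=0$ in the rescaling argument and the symmetric estimates for $t\le 0$.
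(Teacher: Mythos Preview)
Your argument is correct, but it takes a genuinely different route from the paper's. The paper's proof is a two-line reduction: for $(1)\Rightarrow(2)$ it observes that Hyers-Ulam stability with uniqueness is exactly the statement that the bounded $x$ in Lemma~\ref{09} is unique, and then invokes Coppel's characterization~\cite[p.~136]{Coppel} of summable dichotomy on $\mathbb R$ via unique bounded solvability of the inhomogeneous equation; the converse uses the same citation (a summable dichotomy on $\mathbb R$ forces the only bounded solution of~\eqref{LDE} to be trivial). You instead mirror the discrete-time proof of Theorem~\ref{99}: pass through Theorem~\ref{yyy} to obtain a summable trichotomy, use the uniqueness clause to force $P^+(0)=P^-(0)$ via a rescaled bounded solution, and then explicitly glue the two half-line dichotomies, controlling the cross integral $\int_{-\infty}^0\|T(t,s)P(s)\|\,ds$ for $t\ge 0$ by factoring through $t=0$ and applying~\cite[Lemma~1, p.~68]{Coppel}. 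What your approach buys is a self-contained argument within the paper that makes the parallel with Theorem~\ref{99} transparent and avoids the additional black-box citation to~\cite[p.~136]{Coppel}; what the paper's approach buys is brevity, since Coppel's result already packages both the ``$P^+(0)=P^-(0)$'' step and the gluing into a single admissibility statement.
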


\begin{proof}
Suppose that Eq.~\eqref{LDE} is Hyers-Ulam stable with uniqueness. Then, one can easily show that $x$ in the statement of Lemma~\ref{09} is unique. By the result stated in~\cite[p.136]{Coppel}, we have that Eq.~\eqref{LDE}  admits a summable dichotomy. The converse can be easily established by using that if Eq.~\eqref{LDE} admits a summable dichotomy, then the only bounded solution of Eq.~\eqref{LDE}  is the trivial one (see again~\cite[p.136]{Coppel}).
\end{proof}

\begin{remark}
We would like to briefly compare the results in this section with those obtained in~\cite{D}. In~\cite{D} the author has studied the problem of characterizing dichotomies with very general growth rates for Eq.~\eqref{LDE} posed on the half-interval $\mathbb R^+$ in terms of certain modified versions of Hyers-Ulam stability.  These results in particular yield (see~\cite[Corollary 2]{D}) characterization of \emph{exponential} dichotomy of Eq.~\eqref{LDE} (on $\mathbb R^+)$ in terms of two types of Hyers-Ulam stability without imposing any bounded growth conditions. 

Our Theorems~\ref{yyy} and~\ref{qq} yield characterizations of \emph{summable} dichotomies and \emph{trichotomies} in terms of a  single and classical concept of Hyers-Ulam stability for Eq.~\eqref{LDE} on $\mathbb R$. We stress that the concepts of trichotomy considered in this paper can only be related to Eq.~\eqref{LDE} posed on $\mathbb R$ (see Definition~\ref{ST} and Remark~\ref{ET}).

Thus, none of the results in~\cite{D} imply the results in this paper and vice-versa.
\end{remark}
\subsection*{Acknowledgments}
The author is grateful to M. Pituk and X. Tang for useful discussions.

\subsection*{Funding}
 D. D. was supported in part by Croatian Science Foundation under the project IP-2019-04-1239 and by the University of Rijeka under the projects uniri-prirod-18-9 and uniri-prprirod-19-16.

\subsection*{Disclosure statement}
The author reports no conflict of interest.

\end{document}